\newtheorem{theorem}{Theorem}[section]
\newtheorem{lemma}[theorem]{Lemma}
\theoremstyle{definition}
\theoremstyle{remark}
\newtheorem{remark}[theorem]{Remark}
\numberwithin{equation}{section}
\let \la=\lambda
\let \e=\varepsilon
\let \d=\delta
\let \a=\alpha
\let \f=\varphi
\let \b=\beta
\begin{document}

\title[On a counterexample]
{On a counterexample related to weighted weak type estimates for singular integrals}

\author{Marcela Caldarelli}
\address{Departamento de Matem\'atica\\
Universidad Nacional del Sur\\
Bah\'ia Blanca, 8000, Argentina}\email{marcela.caldarelli@uns.edu.ar}

\author{Andrei K. Lerner}
\address{Department of Mathematics,
Bar-Ilan University, 5290002 Ramat Gan, Israel}
\email{lernera@math.biu.ac.il}

\thanks{The second author was supported by the Israel Science Foundation (grant No. 953/13).}

\author{Sheldy Ombrosi}
\address{Departamento de Matem\'atica\\
Universidad Nacional del Sur\\
Bah\'ia Blanca, 8000, Argentina}\email{sombrosi@uns.edu.ar}

\begin{abstract}
We show that the Hilbert transform does not map $L^1(M_{\Phi}w)$ to $L^{1,\infty}(w)$ for every Young function $\Phi$ growing more slowly
than $t\log\log ({\rm e}^{\rm e}+t)$. Our proof is based on a construction of M.C.~Reguera and C. Thiele.
\end{abstract}

\keywords{Hilbert transform, weights, weak-type estimates.}

\subjclass[2010]{42B20,42B25}

\maketitle

\section{Introduction}
Let $H$ be the Hilbert transform. One of open questions in the one-weighted theory of singular integrals is about the optimal Young function $\Phi$ for which
the weak type inequality
\begin{equation}\label{wh}
w\{x\in {\mathbb R}:|Hf(x)|>\la\}\le \frac{c}{\la}\int_{{\mathbb R}}|f|M_{\Phi}w\,dx\quad(\la>0)
\end{equation}
holds for every weight (i.e., non-negative measurable function) $w$ and any $f\in L^1(M_{\Phi}w)$, where $M_{\Phi}$ is the Orlicz maximal operator defined by
$$M_{\Phi}f(x)=\sup_{I\ni x}\inf\left\{\la>0:\frac{1}{|I|}\int_I\Phi\left(\frac{|f(x)|}{{\la}}\right)dx\le 1\right\}.$$
If $\Phi(t)=t$, then $M_{\Phi}=M$ is the standard Hardy-Littlewood maximal operator. If $\Phi(t)=t^r, r>1$, denote $M_{\Phi}f=M_rf$.

C. Fefferman and E.M. Stein \cite{FS} showed that if $H$ is replaced by the maximal operator $M$, then the corresponding inequality holds with $\Phi(t)=t$.
Next, A. C\'ordoba and C. Fefferman \cite{CF} proved (\ref{wh}) with $\Phi(t)=t^r, r>1$. This result was improved by C. P\'erez \cite{P} who showed that (\ref{wh}) holds
with $\Phi(t)=t\log^{\e}(\rm{e}+t),\e>0$ (see also \cite{HP} for a different proof of this result).

Very recently, C. Domingo-Salazar, M.T. Lacey and  G. Rey \cite{DLR}
obtained a further improvement; their result states that (\ref{wh}) holds whenever $\Phi$ satisfies
$$\int_{1}^{\infty}\frac{\Phi^{-1}(t)}{t^2\log(\rm{e}+t)}dt<\infty.$$
For example, one can take $\Phi(t)=t\log\log^{\a}({\rm e}^{\rm e}+t),\a>1$ or
$$\Phi(t)=t\log\log({\rm e}^{\rm e}+t)\log\log\log^{\a}({\rm e}^{{\rm e}^{\rm e}}+t)\quad(\a>1)$$
etc.

A question whether (\ref{wh}) is true with $\Phi(t)=t$ has become known as the Muckenhoupt-Wheeden conjecture. This conjecture was disproved
by M.C. Reguera and C. Thiele \cite{RT} (see also \cite{R} and \cite{CS} for  dyadic and multidimensional versions of this result).

Denote $\Psi(t)=t\log\log ({\rm e}^{\rm e}+t)$. It was conjectured in \cite{HP} that (\ref{wh}) holds with $\Phi=\Psi$.
The above mentioned result in \cite{DLR} establishes (\ref{wh}) for essentially every $\Phi$ growing faster than $\Psi$.

The main result of this note is the observation that the Reguera-Thiele example \cite{RT} actually shows that (\ref{wh}) does not
hold for every $\Phi$ growing more slowly than $\Psi$.

\begin{theorem}\label{main} Let $\Phi$ be a Young function such that
$$\lim_{t\to \infty}\frac{\Phi(t)}{t\log\log ({\rm e}^{\rm e}+t)}=0.$$
Then for every $c>0$, there exist $f,w$ and $\la>0$ such that
$$w\{x\in {\mathbb R}:|Hf(x)|>\la\}>\frac{c}{\la}\int_{{\mathbb R}}|f|M_{\Phi}w\,dx.$$
\end{theorem}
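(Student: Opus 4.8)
\emph{Strategy.} The plan is to revisit the Reguera--Thiele construction of \cite{RT} and to run it while also keeping track of the Orlicz average $M_{\Phi}w$. For each large $N$, that construction produces a compactly supported function $f_N\ge0$ and a weight $w_N\ge0$ --- which, after a harmless modification, we may take to be a finite linear combination $\sum_j c_j\chi_{I_j}$ of characteristic functions of intervals --- such that, after normalizing $\int_{{\mathbb R}}f_N\,Mw_N\,dx=1$, one has $\la_N\,w_N\{x:|Hf_N(x)|>\la_N\}\to\infty$ for a suitable sequence of levels $\la_N$; this is exactly the mechanism by which the Muckenhoupt--Wheeden conjecture is disproved. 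Since replacing $M$ by the larger operator $M_{\Phi}$ only makes (\ref{wh}) easier, the whole task is to bound $\int_{{\mathbb R}}f_N\,M_{\Phi}w_N\,dx$ from above and to see that, under the growth hypothesis on $\Phi$, it is of strictly smaller order than $\la_N\,w_N\{|Hf_N|>\la_N\}$. Granting this, the ratio in the theorem exceeds any prescribed $c>0$ once $N$ is large, and we read off $f=f_N$, $w=w_N$, $\la=\la_N$.

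\emph{The Orlicz bound.} The upper bound rests on the elementary identity, immediate from the definition of the Luxemburg norm,
\[
M_{\Phi}(c\,\chi_E)(x)=\frac{c}{\Phi^{-1}\big(1/M\chi_E(x)\big)}\qquad(c>0,\ E\subset{\mathbb R}),
\]
so that for a single characteristic function $M_{\Phi}$ amplifies $M$ precisely by the factor $h_{\Phi}\big(1/M\chi_E(x)\big)$, where $h_{\Phi}(t):=t/\Phi^{-1}(t)$. Applying this to each term of $w_N=\sum_j c_j\chi_{I_j}$ and summing, one checks that on ${\rm supp}\,f_N$ one has $M_{\Phi}w_N\lesssim h_{\Phi}(\sigma_N)\,Mw_N$, where $\sigma_N\to\infty$ measures how sparse $w_N$ looks from ${\rm supp}\,f_N$; in the Reguera--Thiele configuration $\sigma_N\asymp1/|E_N|$, with $E_N$ the small set carrying the bulk of $w_N$. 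Consequently
\[
\int_{{\mathbb R}}f_N\,M_{\Phi}w_N\,dx\ \lesssim\ h_{\Phi}(\sigma_N)\int_{{\mathbb R}}f_N\,Mw_N\,dx\ =\ h_{\Phi}(\sigma_N).
\]

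\emph{Conclusion.} Put $\phi(s):=\Phi(s)/s$, nondecreasing by convexity; then $h_{\Phi}(t)=\phi\big(\Phi^{-1}(t)\big)$, and since either $\phi$ is bounded or $\Phi^{-1}(t)\le t$ for all large $t$, in either case $h_{\Phi}(t)\lesssim\phi(t)+1$. The hypothesis of the theorem reads exactly $\phi(s)/\log\log({\rm e}^{\rm e}+s)\to0$, hence
\[
h_{\Phi}(t)=o\big(\log\log({\rm e}^{\rm e}+t)\big)\qquad(t\to\infty).
\]
On the other hand $\Psi^{-1}(t)\asymp t/\log\log({\rm e}^{\rm e}+t)$ gives $h_{\Psi}(t)\asymp\log\log({\rm e}^{\rm e}+t)$, while the arithmetic of \cite{RT} shows that the weak-type blow-up there is calibrated exactly to this bump, i.e.
\[
\la_N\,w_N\{x:|Hf_N(x)|>\la_N\}\ \asymp\ \log\log(1/|E_N|)\ \asymp\ h_{\Psi}(\sigma_N).
\]
The last three displays give
\[
\frac{\la_N\,w_N\{x:|Hf_N(x)|>\la_N\}}{\int_{{\mathbb R}}f_N\,M_{\Phi}w_N\,dx}\ \gtrsim\ \frac{h_{\Psi}(\sigma_N)}{h_{\Phi}(\sigma_N)},
\]
and since $h_{\Psi}(\sigma_N)\asymp\log\log({\rm e}^{\rm e}+\sigma_N)$ while $h_{\Phi}(\sigma_N)=o\big(\log\log({\rm e}^{\rm e}+\sigma_N)\big)$, the right-hand side tends to $\infty$. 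Given $c>0$, choose $N$ so that this ratio exceeds $c$; then $f=f_N$, $w=w_N$, $\la=\la_N$ satisfy $w\{x\in{\mathbb R}:|Hf(x)|>\la\}>\frac{c}{\la}\int_{{\mathbb R}}|f|M_{\Phi}w\,dx$.

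\emph{The main obstacle.} The one substantial point is the calibration claim above: that the blow-up of the weak-type constant in \cite{RT} is governed \emph{exactly} by an $L\log\log L$ bump applied to the codensity $\sigma_N$ seen from ${\rm supp}\,f_N$, and not by a larger Orlicz bump. In effect the theorem asserts that the Reguera--Thiele example is sharp, so the proof has to extract from their construction the precise relation between the number of scales used, the measure of the set $E_N$ on which $w_N$ concentrates, and the resulting size of $\la_N\,w_N\{|Hf_N|>\la_N\}$ --- in particular that the codensity is doubly exponential in the gain, $\sigma_N\asymp{\rm e}^{{\rm e}^{G_N}}$ with $G_N:=\la_N\,w_N\{|Hf_N|>\la_N\}$. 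One must also check that the part of $\int_{{\mathbb R}}f_N\,M_{\Phi}w_N\,dx$ coming from the region where $w_N$ concentrates --- where $M_{\Phi}w_N$ can greatly exceed $Mw_N$ --- is negligible, which holds because $f_N$ is supported away from, or is small on, that region.
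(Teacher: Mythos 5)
Your proposal attempts to bypass the paper's extrapolation machinery by directly exhibiting a counterexample $(f_N,w_N,\la_N)$ and then comparing $M_{\Phi}w_N$ with $Mw_N$ on $\mathrm{supp}\,f_N$. This is a genuinely different route than the paper, but it has a gap that you yourself flag and that is not easily closed.

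The central difficulty is that the Reguera--Thiele argument does \emph{not} produce an explicit test function $f_N$ together with a level $\la_N$; it proves the failure of the weak-type estimate by contradiction, via the Cruz-Uribe--P\'erez extrapolation argument. What they construct explicitly is a weight $w_k$ with the two pointwise properties $|Hw_k|\ge (k/3)w_k$ and $Mw_k\le 7w_k$ on $\cup I^{\Delta}_{l,m}$. From these, assuming the weak-type $(1,1)$ estimate, one extrapolates to a weighted $L^2$ bound and plugs in $w_k$, obtaining $k\lesssim 1$ --- a contradiction. At no point is an $f$ with large $\la\,w\{|Hf|>\la\}/\int|f|Mw$ exhibited. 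Consequently, the "calibration claim" you identify as the main obstacle --- that $\la_N\,w_N\{|Hf_N|>\la_N\}\asymp \log\log(1/|E_N|)\asymp h_{\Psi}(\sigma_N)$ --- is not something that can be read off from \cite{RT}; it would require a new and quantitatively sharp direct construction, which is precisely what the extrapolation strategy is designed to avoid.

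There is also a secondary issue in the Orlicz bound. The identity for $M_{\Phi}(c\,\chi_E)$ is fine, but the step "applying this to each term of $w_N=\sum_j c_j\chi_{I_j}$ and summing" does not give $M_{\Phi}w_N\lesssim h_{\Phi}(\sigma_N)\,Mw_N$ on $\mathrm{supp}\,f_N$: the Orlicz maximal operator is only subadditive, the weight is a sum of (infinitely many, in the limiting Reguera--Thiele weight) bumps at all scales, and the relevant codensity parameter depends on the scale of the bump being tested. One would have to localize and argue scale by scale, and even then the claim is really a pointwise comparison of $M_{\Phi}w$ with $M_rw$, which is exactly what the paper proves instead: it establishes the clean inequality $M_{\Phi}f\le c\big(\sup_{t\ge1}\Phi(t)^{1/r}/t\big)\,M_rf$ (inequality (\ref{mphi})), then shows the \emph{stronger} self-improving property $M_rw_k\le 21\,w_k$ on $\cup I^{\Delta}_{l,m}$ for $r=1+3^{-(k+1)}$ (Lemma~\ref{Mr}), and then runs a modified Cruz-Uribe--P\'erez extrapolation (Lemma~\ref{ext}) in which the role of $Mw$ is replaced by $M_rw$. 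Testing the resulting weighted $L^2$ inequality with $f=w=w_k$ yields $k\le c\sup_{t\ge1}\Phi(t)^{1/r_k}/t$, and the hypothesis $\Phi(t)=o(t\log\log({\rm e}^{\rm e}+t))$ then forces a contradiction. Your sketch captures the right heuristic (the doubly exponential codensity and the $\log\log$ threshold) but the missing construction of $f_N$ and the precise calibration are exactly where the proof's actual work is done, and they cannot be waved away.
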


This theorem along with the main result in \cite{DLR} emphasizes that the case of $\Phi=\Psi$ is critical for (\ref{wh}). However, the question whether
(\ref{wh}) holds with $\Phi=\Psi$ remains open.

We mention briefly the main ideas of the Reguera-Thiele example~\cite{RT} and, in parallel, our novel points. 
First, it was shown in \cite{RT} that given $k\in {\mathbb N}$ sufficiently large, there is a weight $w_k$ supported on $[0,1]$ satisfying $Hw_k\ge ckw_k$ and $Mw_k\le cw_k$ on
some subset $E\subset [0,1]$. In Section~2, we show that the latter ``$A_1$ property" can be slightly improved until $M_rw_k\le cw_k$ with $r>1$ depending on $k$.
The second ingredient in \cite{RT} was the extrapolation argument of D. Cruz-Uribe and C. P\'erez~\cite{CUP}. This argument says that assuming (\ref{wh}) with $Mw$ on the right-hand
side, one can deduce a certain weighted $L^2$ inequality for~$H$. It is not clear how to extrapolate in a similar way starting with a general Orlicz maximal function $M_{\Phi}$ in (\ref{wh}).
In Section 3, we obtain a substitute of the argument in \cite{CUP} for $M_rw, r>1,$ instead of $Mw$.

\section{The Reguera-Thiele construction}
We describe below the main parts of the example constructed by M.C. Reguera and C. Thiele \cite{RT}.

An interval $I$ of the form $[3^jn,3^j(n+1)), j,n\in {\mathbb Z}$, is called a triadic interval.

Fix $k\in {\mathbb N}$ large enough. Given a triadic interval $I\subset [0,1)$, denote $I^{\Delta}=\frac{1}{3}I$, namely, $I^{\Delta}$ is the interval with the same center as $I$
and of one third of its length; further, denote by $P(I)$ a triadic interval adjacent to $I^{\Delta}$ and such that $|P(I)|=\frac{1}{3^k}|I|$. Observe that $P(I)$ can be situated either on
the left or on the right of $I^{\Delta}$, and we will return to this point a bit later.

Set now $J^{1}=[0,1)$ and $I_{1,1}=P(J^{1})$. Next, we subdivide $(J^{1})^{\Delta}$ into
$3^{k-1}$ triadic intervals of equal length, and denote them by $J_{m}^{2}, m=1,2,\dots,3^{k-1}.$
Set correspondingly $I_{2,m}=P(J_{m}^{2})$. Notice that $|J_{m}^{2}|=\frac{1}{3^{k}}$
and $|I_{2,m}|=\frac{1}{3^{2k}}$ for $m=1,2,...3^{k-1}$. Observe also that the intervals
$I_{1,1}$ and $I_{2,m}$ are pairwise disjoint.

Proceeding by induction, at $l$-th stage, we subdivide each interval $(J_{m}^{l-1})^{\Delta}$
into $3^{k-1}$ triadic intervals of equal length, and denote all obtained intervals by
$J_{m}^{l}, m=1,2,\dots,3^{(k-1)(l-1)}$. Set $I_{l,m}=P(J_{m}^{l})$. Then
$|J_{m}^l|=\frac{1}{3^{(l-1)k}}$ and $|I_{l,m}|=\frac{1}{3^{lk}}$, and the intervals
$\{I_{l,m}\}$ are pairwise disjoint.

Denote by ${\mathcal I}_l$ and ${\mathcal J}_l$ the families of all intervals $\{I_{l,m}\}$ and $\{J_m^l\}$, respectively,
and set $\Omega_l=\cup_{I\in {\mathcal I}_l}I$. Define the weight $w_k$ such that $w_k([0,1])=1$, $w_k$ is a constant on $\Omega_l$,
and for every $I\in {\mathcal I}_l$ and $J\in {\mathcal J}_{l+1}$, $w_k(I)=w_k(J)$ (we use the standard notation $w_k(E)=\int_Ew_k$).

It was proved in \cite{RT} that one can specify the situation of the intervals $\{I_{l,m}\}$ such that if $k>3000$ and $x\in \cup_{l,m}I_{l,m}^{\Delta}$, then
\begin{equation}\label{estH}
|Hw_k(x)|\ge (k/3)w_k(x);
\end{equation}
moreover,
$$
Mw_k(x)\le 7w_k(x) \quad(x\in \cup_{l,m}I_{l,m}^{\Delta}),
$$
irrespective of the precise configuration of $\{I_{l,m}\}$.

We will show that the latter estimate can be improved by means of replacing $Mw_k$ on the left-hand side by a larger operator $M_rw_k$ with $r>1$ depending on $k$.
In order to do that, we need a more constructive description of $w_k$.

\begin{lemma}\label{descr} We have,
\begin{equation}\label{wk}
w_{k}(x)=\sum_{l=1}^{\infty} \left( \frac{3^{k}}{3^{k-1}+1}\right) ^{l} \chi_{\Omega_l}(x).
\end{equation}
\end{lemma}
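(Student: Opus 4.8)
The plan is to determine the constant value of $w_k$ on each set $\Omega_l$ by setting up and solving a recursion that follows directly from the defining properties of $w_k$. Denote by $a_l$ the constant value of $w_k$ on $\Omega_l$, so that $w_k(x)=\sum_{l\ge 1}a_l\chi_{\Omega_l}(x)$ and the claim is precisely that $a_l=\big(\tfrac{3^k}{3^{k-1}+1}\big)^l$. The two structural facts to exploit are: (i) the normalization $w_k([0,1])=1$, which since $[0,1)$ is partitioned (up to the null boundary issue) by the $I$-intervals together with the nested $J$-intervals gives a telescoping identity for $w_k$-masses; and (ii) the balancing condition $w_k(I)=w_k(J)$ for $I\in\mathcal I_l$, $J\in\mathcal J_{l+1}$.

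The key steps, in order, are as follows. First I would record the cardinalities and lengths: $|\mathcal I_l|=|\mathcal J_l|=3^{(k-1)(l-1)}$, with $|I_{l,m}|=3^{-lk}$ and $|J^l_m|=3^{-(l-1)k}$, so that $|\Omega_l|=3^{(k-1)(l-1)}\cdot 3^{-lk}=3^{-k}\cdot 3^{-(l-1)}$, and hence $w_k(\Omega_l)=a_l\,3^{-k}\,3^{-(l-1)}$. Second, I would unwind the construction at a fixed interval $J\in\mathcal J_l$: its image $(J)^{\Delta}$ is subdivided into the $3^{k-1}$ children $J'\in\mathcal J_{l+1}$, and $P(J)=I_{l,m}\in\mathcal I_l$ is disjoint from all of these; moreover every point of $[0,1)$ eventually lands in some $I$-interval, so by countable additivity $w_k(J)=w_k(P(J))+\sum_{J'\subset (J)^{\Delta}}w_k(J')$, the sum being over the $3^{k-1}$ children. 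Third, using the balancing condition $w_k(P(J))=w_k(I_{l,m})=w_k(J')$ for each child, all $3^{k-1}+1$ terms on the right are equal, giving $w_k(J')=\frac{1}{3^{k-1}+1}w_k(J)$; equivalently, writing $b_l$ for the common value of $w_k(J)$ over $J\in\mathcal J_l$, we get $b_{l+1}=\frac{1}{3^{k-1}+1}b_l$. With $b_1=w_k(J^1)=w_k([0,1))=1$ this yields $b_l=(3^{k-1}+1)^{-(l-1)}$, and then $w_k(\Omega_l)=|\mathcal I_l|\,w_k(I_{l,m})=3^{(k-1)(l-1)}\,w_k(J^{l+1}_{\cdot})=3^{(k-1)(l-1)}b_{l+1}=3^{(k-1)(l-1)}(3^{k-1}+1)^{-l}$. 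Fourth, dividing by $|\Omega_l|=3^{-k}3^{-(l-1)}$ gives $a_l=3^{(k-1)(l-1)}(3^{k-1}+1)^{-l}\cdot 3^{k}3^{(l-1)}=3^{k(l-1)+k}(3^{k-1}+1)^{-l}=3^{kl}(3^{k-1}+1)^{-l}=\big(\tfrac{3^k}{3^{k-1}+1}\big)^l$, which is the asserted formula.

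The only genuinely delicate point is the measure-theoretic bookkeeping in the third step: one must be sure that the $I$-intervals of all generations together with the tails exhaust each $J$-interval, i.e. that $J\in\mathcal J_l$ decomposes (up to a set of Lebesgue measure zero, which carries no $w_k$-mass since $w_k\in L^1$) as the disjoint union $P(J)\sqcup\bigsqcup_{J'}J'$ together with the portion of $J$ outside $J^{\Delta}$ — but that outside portion receives no $w_k$-mass, since by construction $w_k$ is supported on $\bigcup_l\Omega_l$, and one checks inductively that $\bigcup_{l'\ge l,\,J'\subset J}I_{l',\cdot}\subset J^{\Delta}\setminus\bigcup J'$ exactly accounts for it. I expect this verification to be the main obstacle, though it is essentially a matter of carefully tracking the nesting $(J^l_m)^{\Delta}\supset\bigcup_{J'\subset J^l_m}J'$ and $P(J^l_m)\subset (J^l_m)^{\Delta}$; everything else is the elementary geometric-series computation above.
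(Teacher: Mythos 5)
Your proof is correct and follows essentially the same route as the paper: both hinge on the decomposition $w_k(J)=w_k(P(J))+\sum_{J'\subset J^{\Delta}}w_k(J')$ for $J\in\mathcal{J}_l$, combined with the balancing condition $w_k(I)=w_k(J')$ and the normalization $w_k([0,1])=1$. The organizational difference is that you run the recursion on the masses $b_l=w_k(J)$, $J\in\mathcal{J}_l$, obtaining $b_{l+1}=b_l/(3^{k-1}+1)$ with $b_1=1$ supplied directly by the normalization, then divide by $|\Omega_l|$; the paper runs the recursion on the densities $\alpha_l$, leaving a free constant $\gamma$ that it pins down at the end by summing $\alpha_l|\Omega_l|$ as a geometric series. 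Your ordering is marginally cleaner, avoiding the closing series computation, but the ideas and the key identity are identical.

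One small factual slip in your closing paragraph: you write $P(J^{l}_m)\subset (J^{l}_m)^{\Delta}$, but by the paper's construction $P(I)$ is a triadic interval \emph{adjacent} to $I^{\Delta}$, hence contained in $I\setminus I^{\Delta}$, not in $I^{\Delta}$. Your main computation does not rely on the erroneous inclusion (you correctly state earlier that $P(J)$ is disjoint from all the children $J'$), so the argument stands, but the parenthetical justification at the end should be corrected to reflect that $P(J)$ sits in one of the outer thirds of $J$.
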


\begin{proof}
Assume that $w_k=\a_l$ on $\Omega_l$. Let $J\in {\mathcal J}_l$ and take $I\in {\mathcal I}_l$
such that $I\subset J$. Then
\begin{equation}\label{int}
w_k(J)=w_k(I)+w_k(J^{\Delta})=w_k(I)+\sum_{J'\in {\mathcal J}_{l+1}:J'\subset J^{\Delta}}w_k(J').
\end{equation}

Let $I'\in {\mathcal I}_{l-1}$. Then
$$w_k(J)=w_k(I')=\a_{l-1}|I'|=\a_{l-1}|J|.$$
Similarly, $w_k(J')=\a_l|J'|$, and also $w_k(I)=\a_{l}|I|=\a_l\frac{|J|}{3^k}$. Hence, (\ref{int}) implies
$$\a_{l-1}|J|=\a_l\frac{|J|}{3^k}+\a_l\sum_{J'\in {\mathcal J}_{l+1}:J'\subset J^{\Delta}}|J'|=\a_l\frac{|J|}{3^k}+\a_l\frac{|J|}{3}.$$
From this, $\a_l=\frac{3^k}{3^{k-1}+1}\a_{l-1}$, and therefore $\a_{l}=\Big(\frac{3^k}{3^{k-1}+1}\Big)^{l}\gamma$ for some $\gamma>0$.

From the condition $w_k([0,1])=1$, we obtain
\begin{eqnarray*}
1&=&w_k([0,1])=\gamma\sum_{l=1}^{\infty}\Big(\frac{3^k}{3^{k-1}+1}\Big)^{l}|\Omega_l|\\
&=&\gamma\sum_{l=1}^{\infty}\Big(\frac{3^k}{3^{k-1}+1}\Big)^{l}\frac{3^{(k-1)(l-1)}}{3^{kl}}=\gamma\frac{1}{3^{k-1}}\sum_{l=1}^{\infty}
\Big(\frac{3^{k-1}}{3^{k-1}+1}\Big)^{l}=\gamma,
\end{eqnarray*}
and therefore the lemma is proved.
\end{proof}

\begin{lemma}\label{Mr} Let $r=1+\frac{1}{3^{k+1}}$. Then for every $I\in {\mathcal I}_l, l\in {\mathbb N},$ and for all $x\in I^{\Delta}$,
$$M_rw_k(x)\le 21w_k(x).$$
\end{lemma}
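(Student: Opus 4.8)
The plan is to combine the explicit formula for $w_k$ in Lemma~\ref{descr} with a direct evaluation of the averages $\frac1{|I|}\int_I w_k^r$. Write $\beta=\frac{3^k}{3^{k-1}+1}$, so that Lemma~\ref{descr} gives $w_k=\sum_{l\ge1}\beta^l\chi_{\Omega_l}$, hence $w_k^r=\sum_{l\ge1}\beta^{lr}\chi_{\Omega_l}$ and $w_k(x)=\beta^l$ for $x\in\Omega_l$ (in particular for $x\in I^{\Delta}$ with $I\in\mathcal I_l$, since $I\subset\Omega_l$); also $M_rf(x)=\sup_{I\ni x}\big(\frac1{|I|}\int_I|f|^r\big)^{1/r}$. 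The one genuinely arithmetic input is
\[
\beta^r<3,\qquad\text{and more precisely}\qquad 3-\beta^r\ge 3^{1-k}.
\]
This is exactly where the value $r=1+\tfrac1{3^{k+1}}$ is used: since $\beta=\tfrac3{1+3^{1-k}}$ and $\log(1+3^{1-k})<3^{1-k}$, one has $r\log\beta<\log3$ as soon as $r<\tfrac{\log3}{\log3-3^{1-k}}$, and $1+\tfrac1{3^{k+1}}=1+\tfrac{3^{1-k}}{9}$ is comfortably below that threshold because $9>\log3$; the sharper bound follows from an equally elementary convexity estimate. I would record this at the start.

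The second step is to evaluate the average of $w_k^r$ over a building block. If $B=J^l_m\in\mathcal J_l$, then $B$ contains exactly $(3^{k-1})^i$ blocks of $\mathcal J_{l+i}$, each carrying one interval of $\mathcal I_{l+i}$ of length $3^{-(l+i)k}$, while $B\cap\Omega_j=\emptyset$ for $j<l$; hence $|B\cap\Omega_{l+i}|=3^{-lk-i}$ for $i\ge0$, and summing a geometric series,
\[
\frac1{|B|}\int_B w_k^r=3^{-k}\sum_{i\ge0}\beta^{(l+i)r}3^{-i}=\frac{3^{1-k}}{3-\beta^r}\,\beta^{lr}<\beta^{lr},
\]
the last step by the arithmetic estimate. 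By self-similarity the same computation applies inside any block, so every block of $\mathcal J_{l'}$ has $w_k^r$-average $\frac{3^{1-k}}{3-\beta^r}\beta^{l'r}<\beta^{l'r}$.

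The final step is a case analysis over an arbitrary interval. Fix $x\in I^{\Delta}$ with $I=I_{l,m}\in\mathcal I_l$, so $w_k(x)=\beta^l$, and let $J\ni x$. As $w_k$ is supported in $[0,1)$, replacing $J$ by $J\cap[0,1)$ (which still contains $x$) does not decrease $\frac1{|J|}\int_J w_k^r$, so we may assume $J\subset[0,1)$. Let $B_1=[0,1)\supset B_2\supset\cdots\supset B_l=J^l_m$ be the chain of ancestor blocks of $I$, and let $j_0\le l$ be largest with $J\subset B_{j_0}$. For $w_k$, each $B_j$ splits as $I_{j,\cdot}\cup(B_j)^{\Delta}\cup(\text{null set})$, with $(B_j)^{\Delta}$ partitioned into $3^{k-1}$ blocks of $\mathcal J_{j+1}$ of length $|I_{j,\cdot}|=|B_{j+1}|$, each of $w_k^r$-average $<\beta^{(j+1)r}$. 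Three cases arise. (a) $j_0=l$ and $|J|\le\tfrac13|I|$: then $J\subset I$ (as $x$ lies in the middle third $I^{\Delta}$), so $\frac1{|J|}\int_J w_k^r=\beta^{lr}$. (b) $j_0=l$ and $|J|>\tfrac13|I|$: then $\int_J w_k^r\le\beta^{lr}|I|+\int_{J\cap(B_l)^{\Delta}}w_k^r$; since $I$ abuts $(B_l)^{\Delta}$, the interval $J\cap(B_l)^{\Delta}$ abuts a block boundary and meets at most $\frac{|J\cap(B_l)^{\Delta}|}{|I|}+1$ blocks of $\mathcal J_{l+1}$, giving $\int_J w_k^r<\beta^{(l+1)r}(|J|+2|I|)$ and hence $\frac1{|J|}\int_J w_k^r<7\beta^{(l+1)r}=7\beta^r\beta^{lr}<21\beta^{lr}$. (c) $j_0<l$: all of $J$ lies in $B_{j_0}$, so $\int_J w_k^r$ is at most the mass of $I_{j_0,\cdot}$ plus that of the blocks of $\mathcal J_{j_0+1}$ met by $J$; counting these and using $|J|\ge\tfrac14|B_{j_0+1}|$ — valid because $x$ is interior to $B_{j_0+1}$ (at distance $\ge\tfrac13|B_{j_0+1}|$ from its boundary when $j_0+1<l$, since $B_{j_0+2}\subset(B_{j_0+1})^{\Delta}$, and at distance $\ge(\tfrac13-3^{-k})|B_l|$ when $j_0+1=l$, since $I=P(B_l)$ lies in an outer third of $B_l$) — yields $\frac1{|J|}\int_J w_k^r<13\beta^{lr}$. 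Thus $\frac1{|J|}\int_J w_k^r\le 21\beta^{lr}$ for every $J\ni x$, so $M_rw_k(x)\le(21\beta^{lr})^{1/r}=21^{1/r}\beta^l\le 21\,w_k(x)$.

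The main obstacle is the bookkeeping in this last step: one must handle arbitrary, in particular non-triadic, intervals and, above all, intervals straddling the boundary between two consecutive blocks, keeping every overlap constant explicit. The geometric fact that makes the constant absolute is that $x$, lying in $I^{\Delta}$ with $I=P(\cdot)$ in an outer third, is always comfortably interior to whichever block it occupies, so $|J|$ cannot be small relative to that block; the arithmetic estimate $\beta^r<3$ — the inequality that fails at $r=1$, and for which $r=1+3^{-(k+1)}$ is tuned so that it barely survives — is conceptually the crux but technically routine.
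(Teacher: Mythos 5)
Your proof is correct and reaches the same constant $21$, but the bookkeeping is organized differently from the paper's. The paper's route is: for an arbitrary interval $R\ni x$ with $R\not\subset I$, note $|R|\ge|I|/3$, cover $R\cap[0,1)$ by the family $\mathcal F$ of triadic intervals of length exactly $|I|=3^{-lk}$ meeting $R$, show $\sum_{I'\in\mathcal F}|I'|\le 7|R|$, and then classify each $I'$ that meets the support as either contained in some $\mathcal I_\nu$-interval with $\nu\le l$ (so its $w_k^r$-average is $\le w_k(x)^r$) or equal to a $\mathcal J_{l+1}$-block (whose average is computed via the geometric series). This sidesteps all discussion of where an interval crosses block boundaries, absorbing the overlaps into the single packing estimate $\sum|I'|\le 7|R|$. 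Your route instead climbs the ancestor chain $B_1\supset\cdots\supset B_l$ of $\mathcal J$-blocks above $I$, locates the deepest $B_{j_0}$ containing $J$, and counts by hand how many $\mathcal J_{j_0+1}$-blocks $J$ straddles, using the geometric observation that $x$ is at distance at least $(\tfrac13-3^{-k})|B_{j_0+1}|$ from $\partial B_{j_0+1}$ (so $|J|\gtrsim |B_{j_0+1}|$ once $J$ leaves $B_{j_0+1}$). Both approaches hinge on the same two computations you isolate: the block-average identity $\tfrac1{|B|}\int_B w_k^r=\tfrac{3^{1-k}}{3-\beta^r}\beta^{lr}$, which is just a rearrangement of the display in the paper, and the arithmetic input $3-\beta^r\ge 3^{1-k}$. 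The paper's triadic covering is tighter and avoids the ``interiority'' lemma; your version is more explicitly geometric.

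One slip in the arithmetic sketch that you flag as routine: from $\log(1+3^{1-k})<3^{1-k}$ you obtain $\log\beta=\log3-\log(1+3^{1-k})>\log 3-3^{1-k}$, hence $\tfrac{\log3}{\log\beta}<\tfrac{\log3}{\log3-3^{1-k}}$; so ``$r<\tfrac{\log3}{\log3-3^{1-k}}$'' is a \emph{necessary} condition for $\beta^r<3$, not a sufficient one — the inequality you invoked runs the wrong way. The conclusion $3-\beta^r\ge 3^{1-k}$ is nonetheless true: you need a lower bound such as $\log(1+u)\ge u-u^2/2$, or, as in the paper, the direct estimate $3^{1/3^{k+1}}\le 1+3^{-k}$ giving $\beta^r\le(1+3^{-k})\beta=3-\tfrac{2}{3^{k-1}+1}\le 3-3^{1-k}$.
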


\begin{proof} Let $I\in {\mathcal I}_l$, and let $x\in I^{\Delta}$. Take an arbitrary interval $R$ containing $x$. If $R\subset I$, then
$$
\left(\frac{1}{|R|}\int_Rw_k^r(y)dy\right)^{1/r}=\left(\frac{3^{k}}{3^{k-1}+1}\right)^{l}=w_k(x).
$$

Assume that $R\not\subset I$. Then $|R|\ge |I|/3$. Denote by ${\mathcal F}$ the family of all triadic intervals $I'\subset [0,1)$ such that $|I'|=|I|$ and $I'\cap R\not=\emptyset$. There are at most two intervals $I'\in {\mathcal F}$ not containing in $R$, and therefore,
\begin{equation}\label{sum}
\sum_{I'\in{\mathcal F}}|I'|\le |R|+\sum_{I'\in{\mathcal F}:I'\not\subset R}|I'|\le |R|+2|I|\le 7|R|.
\end{equation}

We claim that if $r=1+\frac{1}{3^{k+1}}$, then for every $I'\in {\mathcal F}$,
\begin{equation}\label{claim}
\left(\frac{1}{|I'|}\int_{I'}w_k^r(y)dy\right)^{1/r}\le 3w_k(x).
\end{equation}
This property would conclude the proof since then, by (\ref{sum}),
$$
\frac{1}{|R|}\int_{R}w_k^r(y)dy\le \sum_{I'\in {\mathcal F}}\frac{|I'|}{|R|}\frac{1}{|I'|}\int_{I'}w_k^r(y)dy\le 7(3w_k(x))^r.
$$

To show (\ref{claim}), one can assume that $I'$ has a non-empty intersection with the support of $w_k$. If $I'\not=J$ for some $J\in {\mathcal J}_{l+1}$, then
$I'\subset L$, where $L\in {\mathcal I}_{\nu}, \nu\le l$, and hence
$$\left(\frac{1}{|I'|}\int_{I'}w_k^r(y)dy\right)^{1/r}=\left(\frac{3^{k}}{3^{k-1}+1}\right)^{\nu}\le w_k(x).$$

It remains to consider the case when $I'=J$ for some $J\in {\mathcal J}_{l+1}$. Using that
for every $j\ge l+1$,  $J\in {\mathcal J}_{l+1}$ contains $3^{(k-1)(j-l-1)}$ intervals $I\in {\mathcal I}_j$, we obtain
\begin{eqnarray*}
\frac{1}{|I'|}\int_{I'}w_k^r(y)dy&=&3^{lk}\sum^{\infty}_{j=l+1}\sum_{I\in {\mathcal I}_j:I\subset I'}\int_{I}w_k^r(y)dy\\
&=&\sum^{\infty}_{j=l+1} 3^{(k-1)(j-l-1)} 3^{(l-j)k} \left( \frac{3^{k}}{3^{k-1}+1}\right)^{j r}\\
&=&\frac{1}{3^{k-1}}\sum_{j=1}^{\infty}3^{-j}\left( \frac{3^{k}}{3^{k-1}+1}\right)^{(j+l)r}.
\end{eqnarray*}
Therefore,
\begin{eqnarray*}
\frac{1}{|I'|}\int_{I'}w_k^r(y)dy&=&\frac{1}{3^{k-1}}\left(\sum_{j=1}^{\infty}3^{-j}\left( \frac{3^{k}}{3^{k-1}+1}\right)^{jr}\right)w_k(x)^r\\
&\le&\frac{1}{3^{k-1}}\frac{3}{3-(3^k/(3^{k-1}+1))^r}w_k(x)^r,
\end{eqnarray*}
whenever $\Big(\frac{3^k}{3^{k-1}+1}\Big)^r<3$.

If $r=1+\frac{1}{3^{k+1}}$, then
\begin{eqnarray*}
\Big(\frac{3^k}{3^{k-1}+1}\Big)^{1+\frac{1}{3^{k+1}}}\le 3^{\frac{1}{3^{k+1}}}\frac{3^k}{3^{k-1}+1}&\le& \Big(1+\frac{1}{3^k}\Big)\frac{3^k}{3^{k-1}+1}\\
&=&3-\frac{2}{3^{k-1}+1},
\end{eqnarray*}
Hence
$$
\frac{1}{|I'|}\int_{I'}w_k^r(y)dy\le \frac{3}{2}\frac{3^{k-1}+1}{3^{k-1}}w_k(x)^r\le 3w_k(x)^r,
$$
which completes the proof.
\end{proof}

\section{Extrapolation} Here we follow the extrapolation argument of D. Cruz-Uribe and C.~P\'erez \cite{CUP}, with some modifications.
\begin{lemma}\label{ext}
Assume that for every weight $w$ and for all $f\in L^1(M_rw)$,
$$\|Hf\|_{L^{1,\infty}(w)}\le A_r\|f\|_{L^1(M_r(w))}\quad (1<r<2).$$
Let $\a_r=\frac{r}{2-r}$.
There is $c>0$ such that for any weight $w$ supported in $[0,1]$ one has
$$\int_0^1\left(\frac{|Hw|}{(M_{\a_r}w)^{\a_r/r}}\right)^2w^{\a_r}dx\le cA_r^2\int_0^1wdx
\quad(1<r<2).$$
\end{lemma}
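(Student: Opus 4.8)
The plan is to adapt the Cruz-Uribe--P\'erez scheme to the operator $M_r$. It is convenient to put $\sigma=w^{\alpha_r/r}=w^{1/(2-r)}$. Then $w=\sigma^{2-r}$, and since $M_r(w^{\alpha_r/r})=\big(M(w^{\alpha_r})\big)^{1/r}=(M_{\alpha_r}w)^{\alpha_r/r}$, the weight in the statement equals $v:=\sigma^{r}/(M_r\sigma)^{2}$; so the goal becomes
\[
\int_0^1|Hw|^{2}\,\frac{\sigma^{r}}{(M_r\sigma)^{2}}\,dx\le c\,A_r^{2}\int_0^1\sigma^{2-r}\,dx .
\]
Three facts will be used: $L^2$-duality; the anti-selfadjointness $H^{*}=-H$; and the elementary pointwise inequality $M_r\sigma\ge\sigma$ (equivalently $M_{\alpha_r}w\ge w$). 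The role of the particular value $\alpha_r=r/(2-r)$ is that the exponents then satisfy the identities $2\alpha_r/r=\alpha_r+1$ and $\alpha_r(2-r)/r=1$, which is exactly what makes all the powers of $M_r\sigma$ produced along the way collapse against $\int_0^1 w$.

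I would proceed as follows. First dualize: since everything is supported in $[0,1]$, there is $g\ge0$ supported in $[0,1]$ with $\int_0^1 g^{2}v^{-1}\,dx=1$ and $\big(\int_0^1|Hw|^{2}v\big)^{1/2}=\int_0^1|Hw|\,g\,dx$. Put $\phi=(\operatorname{sgn}Hw)\,g\chi_{[0,1]}$; using $H^{*}=-H$ and $\operatorname{supp}w,\operatorname{supp}\phi\subset[0,1]$,
\[
\int_0^1|Hw|\,g\,dx=-\int_0^1 w\,H\phi\,dx\le\int_0^1 w\,|H\phi|\,dx=\int_0^1 \sigma^{2-r}|H\phi|\,dx .
\]
Next, the weak-type hypothesis is fed into the Rubio de Francia argument of \cite{CUP} adapted to $M_r$ (equivalently, it is combined with the unweighted $L^{p}$-bounds of $H$) to produce a strong-type substitute of the form $\int_{\mathbb R}|H\phi|^{r}\sigma^{2-r}\,dx\le C A_r^{2-r}\int_{\mathbb R}|\phi|^{r}(M_r\sigma)^{2-r}\,dx$; the weight exponents $2-r$ here are forced, and this is the step that dictates the choice $\alpha_r=r/(2-r)$. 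Applying this to $\phi$, then H\"older with exponents $r,r'$ (using $w=\sigma^{2-r}$), and finally H\"older with exponents $2/r,\,2/(2-r)$ against the normalization $\int_0^1 g^{2}v^{-1}=1$, one reduces the whole estimate to the single pointwise bound $(M_r\sigma)^{2}v^{\,r/(2-r)}\le\sigma^{2-r}$; inserting $v=\sigma^{r}/(M_r\sigma)^{2}$ and collapsing the powers of $M_r\sigma$ by $M_r\sigma\ge\sigma$ gives exactly this, leaving $\int_0^1\sigma^{2-r}=\int_0^1 w$. Tracking the exponents, all powers of $\int_0^1 w$ add up to $1/2$ and the power of $A_r$ works out to $(2-r)/r\le1$, which yields $\big(\int_0^1|Hw|^{2}v\big)^{1/2}\le c\,A_r\big(\int_0^1 w\big)^{1/2}$.

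The main obstacle is quantitative: the constant in the conclusion must be an \emph{absolute} multiple of $A_r^{2}$, with no factor blowing up as $r\to1$ (in the intended application an extra factor such as $(r-1)^{-1}$ would wreck the comparison with the Reguera--Thiele weight). This is why one cannot simply invoke Marcinkiewicz interpolation straight to the exponent $r$, and instead must run the Rubio de Francia / interpolation step at exponents kept away from the bad endpoints and exploit the exact cancellations forced by $\alpha_r=r/(2-r)$; the factor $A_r^{2}$ reflects that the hypothesis is effectively used twice, once on $\phi$ and once ``dually'' through $H^{*}=-H$. Verifying that every maximal function arising can indeed be absorbed via $M_r\sigma\ge\sigma$, and that the bookkeeping of the three H\"older steps closes with the stated powers, is where the real work lies.
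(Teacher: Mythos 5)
Your reformulation in terms of $\sigma=w^{1/(2-r)}$ and the exponent arithmetic are correct, and the closing bookkeeping (the Hölder steps and the collapse via $M_r\sigma\ge\sigma$, which indeed reduces to the pointwise identity $(M_r\sigma)^2 v^{r/(2-r)}\le\sigma^{2-r}$ and the exponent sum $1/r'+(2-r)/(2r)=1/2$) is consistent. But the argument has a genuine gap at its central analytic step: the ``strong-type substitute''
$$\int_{\mathbb R}|H\phi|^{r}\sigma^{2-r}\,dx\le C\,A_r^{2-r}\int_{\mathbb R}|\phi|^{r}(M_r\sigma)^{2-r}\,dx,$$
equivalently $\int|H\phi|^r w\le C A_r^{2-r}\int|\phi|^r M_{\a_r}w$, is asserted but neither proved nor actually contained in \cite{CUP}. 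The Cruz-Uribe--P\'erez extrapolation produces weak-type conclusions, not a strong weighted $L^r$ bound, and passing from the hypothesized weak $(1,1)$ inequality with $M_rw$ to a \emph{strong} $L^r$ estimate would normally require a companion estimate for interpolation --- exactly the move you yourself note is forbidden here because of the constant. So the heart of the proof is missing, and the vague appeal to ``Rubio de Francia adapted to $M_r$'' does not supply it. (Relatedly, the remark that ``the hypothesis is effectively used twice'' does not match your own sketch: the hypothesis enters only once, in the unproved step.)

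The paper takes a different and more careful route that never needs any strong weighted $L^r$ bound for $H$. Setting $\b_r=\frac{r(r-1)}{2-r}$ so that $\a_r-\b_r=r$ and $\a_r-\frac{2\b_r}{r}=1$, one proves the pointwise inequality
$$M_r(gw)(x)\le 2\Bigl(M^c_{w^{\a_r}}\bigl(g^r/w^{\b_r}\bigr)(x)\,M_{\a_r}w(x)^{\a_r}\Bigr)^{1/r},$$
then applies the hypothesis to $\int_{\{|Hf|>1\}}gw\le A_r\|f\|_{L^1(M_r(gw))}$, Cauchy--Schwarz, and the $L^{2/r}(w^{\a_r})$--boundedness of the centered weighted maximal operator $M^c_{w^{\a_r}}$ (Besicovitch, with constant independent of the weight); the identity $\a_r-\frac{2\b_r}{r}=1$ makes the second factor collapse to $\|g\|_{L^2(w)}$. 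This yields a \emph{weak} $L^2$ estimate $\|Hf\|_{L^{2,\infty}(w)}\le cA_r\|f\|_{L^2\bigl((M_{\a_r}w)^{2\a_r/r}/w^{\a_r}\bigr)}$, which is then dualized at the $L^{2,\infty}$--$L^{2,1}$ level (here is where $H^*=-H$ is used), and finally one sets $f=w$ and evaluates $\|\chi_{[0,1]}\|_{L^{2,1}(w)}=2w([0,1])^{1/2}$. If you wish to salvage your outline, you would need to either prove the strong-type substitute you invoked, or, more realistically, replace that step by the pointwise inequality for $M_r(gw)$ above and obtain a weak $L^2$ estimate directly, as the paper does.
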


\begin{proof} Denote $\b_r=\frac{r(r-1)}{2-r}$. The numbers $\a_r$ and $\b_r$ are chosen in such a way in order to satisfy $\a_r-\b_r=r$ and $\a_r-\frac{2\b_r}{r}=1$.

Let $g\ge 0$. Since
$$\frac{1}{|I|}\int_I(gw)^r=\left(\frac{1}{w^{\a_r}(I)}\int_I(g^r/w^{\b_r})w^{\a_r}\right)\frac{w^{\a_r}(I)}{|I|},$$
we get
\begin{equation}\label{mr}
M_r(gw)(x)\le 2\Big(M^c_{w^{\a_r}}(g^r/w^{\b_r})(x)M_{\a_r}(w)(x)^{\a_r}\Big)^{1/r},
\end{equation}
where $M^c_v$ means the centered weighted maximal operator with respect to a weight $v$.

Using the initial assumption on $H$ along with (\ref{mr}), and applying H\"older's inequality along with the boundedness of $M^c_v$ on $L^p(v), p>1,$ we obtain
\begin{eqnarray*}
&&\int_{\{|Hf|>1\}}gw\le A_r \|f\|_{L^1(M_r(gw))}\\
&&\le 2A_r\int_{{\mathbb R}}\Big(|f|M_{\a_r}(w)^{\frac{\a_r}{r}}\frac{1}{w^{\a_r/2}}\Big)\Big(M^c_{w^{\a_r}}(g^r/w^{\b_r})^{\frac{1}{r}}w^{\a_r/2}\Big)dx\\
&&\le 2A_r\|f\|_{L^2\big((M_{\a_r}w)^{\frac{2\a_r}{r}}/w^{\a_r}\big)}\|M^c_{w^{\a_r}}(g^r/w^{\b_r})^{\frac{1}{r}}\|_{L^2(w^{\a_r})}\\
&&\le cA_r\|f\|_{L^2\big((M_{\a_r}w)^{\frac{2\a_r}{r}}/w^{\a_r}\big)}\|g\|_{L^2(w)}.
\end{eqnarray*}
Taking here the supremum over all $g\ge 0$ with $\|g\|_{L^2(w)}=1$ yields
$$\|Hf\|_{L^{2,\infty}(w)}\le cA_r\|f\|_{L^2\big((M_{\a_r}w)^{\frac{2\a_r}{r}}/w^{\a_r}\big)}.$$

By duality, the latter inequality is equivalent to
$$\|Hf\|_{L^2\big(w^{\a_r}/(M_{\a_r}w)^{\frac{2\a_r}{r}}\big)}\le cA_r\|f/w\|_{L^{2,1}(w)},$$
where $L^{2,1}(w)$ is the weighted Lorentz space. It remains to take here $f=w$ and use that
$$\|\chi_{[0,1]}\|_{L^{2,1}(w)}=\int_0^{w([0,1])}t^{-1/2}dt=2w([0,1])^{1/2}.$$
\end{proof}

\section{Proof of Theorem \ref{main}}
Our goal is to use the extrapolation Lemma \ref{ext}, assuming (\ref{wh}) with a general Orlicz maximal function $M_{\Phi}$.
Hence, we need a relation between $M_{\Phi}$ and $M_r$ with possibly good dependence of the corresponding constant on $r$  when $r\to 1$.
Such a relation was recently obtained in \cite{DL} (see Lemma 6.2 and inequality (6.4) there). For the reader convenience we include a proof here.

\begin{lemma} For all $x\in {\mathbb R}$,
\begin{equation}\label{MPhi}
M_{\Phi}f(x)\le \left(2\sup_{t\ge \Phi^{-1}(1/2)}\frac{\Phi(t)}{t^r}\right)^{1/r}M_{r}f(x)\quad(r>1).
\end{equation}
\end{lemma}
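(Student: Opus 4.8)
The plan is to prove \eqref{MPhi} one interval at a time. Fix $x\in\mathbb R$ and an interval $I\ni x$. Recalling that $M_{\Phi}f(x)$ is the supremum over intervals containing $x$ of the Luxemburg average $\|f\|_{\Phi,I}=\inf\{\lambda>0:\frac1{|I|}\int_I\Phi(|f|/\lambda)\,dy\le 1\}$, it suffices to establish
$$\|f\|_{\Phi,I}\le C_r^{1/r}\Big(\frac1{|I|}\int_I|f|^r\Big)^{1/r},\qquad C_r:=2\sup_{t\ge \Phi^{-1}(1/2)}\frac{\Phi(t)}{t^r},$$
and then take the supremum over all $I\ni x$. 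One may assume $C_r<\infty$, since otherwise there is nothing to prove, and (the remaining cases being trivial) that $B:=\big(\frac1{|I|}\int_I|f|^r\big)^{1/r}$ lies in $(0,\infty)$. Put $\lambda_0:=C_r^{1/r}B$. By the definition of the Luxemburg norm it is then enough to verify that $\frac1{|I|}\int_I\Phi(|f|/\lambda_0)\,dy\le 1$.

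To do this I would split $I$ at the natural threshold, writing $E:=\{y\in I:\ |f(y)|/\lambda_0\ge\Phi^{-1}(1/2)\}$. On $I\setminus E$, monotonicity of $\Phi$ gives $\Phi(|f(y)|/\lambda_0)\le \Phi(\Phi^{-1}(1/2))\le 1/2$, so that part of the average is at most $1/2$. On $E$, for each $y$ the number $t=|f(y)|/\lambda_0$ satisfies $t\ge\Phi^{-1}(1/2)$, hence $\Phi(t)\le\big(\sup_{s\ge\Phi^{-1}(1/2)}\Phi(s)/s^r\big)t^r=\tfrac{C_r}{2}t^r$; using $\lambda_0^r=C_rB^r$ this reads $\Phi(|f(y)|/\lambda_0)\le\frac{|f(y)|^r}{2B^r}$. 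Averaging over $E\subset I$ and invoking the definition of $B$ shows that this part of the average is likewise at most $\frac1{2B^r}\cdot\frac1{|I|}\int_I|f|^r=\frac12$. Adding the two contributions gives $\frac1{|I|}\int_I\Phi(|f|/\lambda_0)\,dy\le 1$, which is exactly what was needed, and taking the supremum over $I\ni x$ completes the argument.

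The only delicate point is the behaviour of $\Phi^{-1}$ at the flat piece of $\Phi$: one should fix the convention $\Phi^{-1}(s)=\inf\{t\ge 0:\Phi(t)\ge s\}$, so that $\Phi(t)<1/2$ for $t<\Phi^{-1}(1/2)$ while the supremum defining $C_r$ genuinely controls $\Phi(t)/t^r$ on $\{t\ge\Phi^{-1}(1/2)\}$; for a bona fide Young function $\Phi^{-1}(1/2)\in(0,\infty)$ and this causes no trouble. Beyond this bookkeeping the proof is just the elementary two-region estimate above, so I do not anticipate a real obstacle.
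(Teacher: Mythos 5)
Your proof is correct and follows essentially the same route as the paper: split the Luxemburg average at the threshold $\Phi^{-1}(1/2)$, bound the small part by $1/2$ and the large part by $c_r$ times the $L^r$ average, then choose $\lambda_0=\big(\tfrac{2c_r}{|I|}\int_I|f|^r\big)^{1/r}$ so that the total is at most $1$. The only addition is your explicit remark on the convention for $\Phi^{-1}$, which the paper leaves implicit.
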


\begin{proof} For any interval $I\subset {\mathbb R}$, 
\begin{eqnarray*}
\int_I\Phi\left(\frac{|f|}{\la}\right)
&=&\int\limits_{\{x\in I:|f|<\Phi^{-1}(1/2)\la\}}\Phi\left(\frac{|f|}{\la}\right)
+\int\limits_{\{x\in I:|f|\ge\Phi^{-1}(1/2)\la\}}\Phi\left(\frac{|f|}{\la}\right)\\
&\le& \frac{|I|}{2}+c_r\int_I(|f|/\la)^rdx,
\end{eqnarray*}
where $c_r=\sup_{t\ge \Phi^{-1}(1/2)}\frac{\Phi(t)}{t^r}$. Therefore, setting $\la_0=\Big(\frac{2c_r}{|I|}\int_I|f|^r\Big)^{1/r}$,
we obtain $\frac{1}{|I|}\int_I\Phi(|f|/\la_0)dx\le 1$, which proves (\ref{MPhi}).  
\end{proof}

It follows easily from (\ref{MPhi}) that 
\begin{equation}\label{mphi}
M_{\Phi}(x)\le c\left(\sup_{t\ge 1}\frac{\Phi(t)^{1/r}}{t}\right)M_rf(x)\quad (r>1),
\end{equation}
where $c$ may depend on $\Phi$ but it does not depend on $r$. 

\begin{proof}[Proof of Theorem \ref{main}]
Suppose, by contrary, that (\ref{wh}) holds. Then combining (\ref{mphi}) with Lemma \ref{ext}, we obtain
$$\int_0^1\left(\frac{|Hw|}{(M_{\a_r}w)^{\a_r/r}}\right)^2w^{\a_r}dx\le c\left(\sup_{t\ge 1}\frac{\Phi(t)^{1/r}}{t}\right)^2\int_0^1wdx
\quad(1<r<2).$$
Set here $r=r_k=1+\frac{1}{2\cdot 3^{k+1}+1}$, and $w=w_k$ as constructed in Section~2. Then $\a_{r_k}=\frac{r_k}{2-r_k}=1+\frac{1}{3^{k+1}}$.
Applying (\ref{estH}) along with Lemma \ref{Mr} yields
\begin{eqnarray*}
\int_0^1\left(\frac{|Hw_k|}{(M_{\a_r}w_k)^{\a_r/r}}\right)^2w_k^{\a_r}dx&\ge& \frac{k^2}{9\cdot 27^{\frac{2}{2-r_k}}}
\int_{\cup_{l\in{\mathbb N}}\cup_{I\in {\mathcal I}_l}I^{\Delta}}w_k\\
&=&\frac{k^2}{27^{1+\frac{2}{2-r_k}}}\int_0^1w_k,
\end{eqnarray*}
and we obtain
\begin{equation}\label{estphi}
k\le c\sup_{t\ge 1}\frac{\Phi(t)^{1/r_k}}{t}.
\end{equation}

It remains to estimate the right-hand side of (\ref{estphi}). Write $\Phi(t)=t\log\log({\rm e}^{\rm e}+t)\phi(t)$, where
$\lim_{t\to \infty}\phi(t)=0$. If $t>{\rm e}^{r'}$, then
$$\log\log t=\log(r')+\log\log t^{1/r'}\le \log(r')+t^{1/r'},$$
and hence
$$\frac{\Phi(t)^{1/r}}{t}=\frac{\big(\log\log({\rm e}^{\rm e}+t)\phi(t)\big)^{1/r}}{t^{1/r'}}\le c(\log r')^{1/r}(\sup_{t\ge {\rm e}^{r'}}\phi(t))^{1/r}.$$

On the other hand, if $0<\d<1$, then
\begin{eqnarray*}
\sup_{1\le t\le e^{r'}}\frac{\Phi(t)^{1/r}}{t}&\le& \sup_{1\le t\le {\rm e}^{{\rm e}^{(\log r')^{\d}}}}\big(\log\log({\rm e}^{\rm e}+t)\phi(t)\big)^{1/r}\\
&+&\sup_{{\rm e}^{{\rm e}^{(\log r')^{\d}}}\le t\le e^{r'}}\big(\log\log({\rm e}^{\rm e}+t)\phi(t)\big)^{1/r}\\
&\le& c\Big((\log r')^{\d/r}+(\log r')^{1/r}\sup_{t\ge {\rm e}^{{\rm e}^{(\log r')^{\d}}}}\phi(t)^{1/r}\Big).
\end{eqnarray*}

Setting $\displaystyle\b_k=\sup_{t\ge {\rm e}^{{\rm e}^{(\log r_k')^{\d}}}}\phi(t)^{1/r_k}$ and combining both cases, we obtain
\begin{eqnarray*}
\sup_{t\ge 1}\frac{\Phi(t)^{1/r_k}}{t}&\le& c((\log r_k')^{\d/r_k}+\b_k(\log r_k')^{1/r_k})\\
&\le& c(k^{\d}+\b_kk).
\end{eqnarray*}
Since $\b_k\to 0$ as $k\to \infty$, we arrive to a contradiction with (\ref{estphi}), and therefore the theorem is proved.
\end{proof}

\begin{remark}
The following inequality is contained implicitly in \cite{HP}:
$$
\la w\{x\in {\mathbb R}:|Hf(x)|>\la\}\le c\log(r')\|f\|_{L^1(M_rw)}\quad(r>1).
$$
The proof of Theorem \ref{main} shows that $\log(r')$ here is optimal, namely, it cannot be replaced by $\f(r')$ for any increasing $\f$ such that
$\displaystyle\lim_{t\to\infty}\frac{\f(t)}{\log t}=~0$.
\end{remark}

\end{document}